\numberwithin{figure}{section}
\theoremstyle{plain}
\newtheorem{thm}{\protect\theoremname}
  \theoremstyle{plain}
  \newtheorem{prop}[thm]{\protect\propositionname}
  \theoremstyle{plain}
  \newtheorem{cor}[thm]{\protect\corollaryname}
  \theoremstyle{plain}
  \newtheorem{lem}[thm]{\protect\lemmaname}
  \theoremstyle{remark}
  \newtheorem{rem}[thm]{\protect\remarkname}
  \theoremstyle{remark}
  \newtheorem*{rem*}{\protect\remarkname}
\date{}
\newtheorem*{facts}{{\bf Facts}}
  \providecommand{\corollaryname}{Corollary}
  \providecommand{\lemmaname}{Lemma}
  \providecommand{\propositionname}{Proposition}
  \providecommand{\remarkname}{Remark}
\providecommand{\theoremname}{Theorem}
\begin{document}
\global\long\def\R{\mathbb{R}}

\global\long\def\C{\mathbb{C}}

\global\long\def\Z{\mathbb{Z}}

\global\long\def\N{\mathbb{N}}

\global\long\def\Q{\mathbb{Q}}

\global\long\def\T{\mathbb{T}}

\global\long\def\F{\mathbb{F}}

\global\long\def\Sph{\mathbb{S}}

\global\long\def\sub{\subseteq}

\global\long\def\cvx{\mbox{Cvx}\left(\R^{n}\right)}

\global\long\def\cvxo{\text{Cvx}_{0}\left(\R^{n}\right)}

\global\long\def\lcg{\text{LC}_{g}\left(\R^{n}\right)}

\global\long\def\lc{\text{LC}\left(\R^{n}\right)}

\global\long\def\dis{\text{Dis }\left(\R^{n}\right)}

\global\long\def\one{\mathbbm1}

\global\long\def\infc#1#2{#1\,+_{\text{cvx}}\,#2}

\global\long\def\supc#1#2{#1\,+_{\text{lc}}\,#2}

\global\long\def\vol{\text{Vol }}

\global\long\def\EE{\mathbb{E}}

\global\long\def\cvdold{\,\cdot_{\text{cvx}}\,}

\global\long\def\lcdold{\,\cdot_{\text{lc}}\,}

\global\long\def\cvd{\,\check{\cdot}\,}

\global\long\def\lcd{\,\hat{\cdot}\,}

\global\long\def\cvxdot{\,\cdot_{\text{cvx}}\,}

\global\long\def\cvxplus{\,+_{{\scriptstyle {\rm cvx}}}\,}

\global\long\def\convp{\overset{\smallsmile}{+}}

\global\long\def\logcp{\overset{\smallfrown}{+}}

\global\long\def\cvp{\,+_{\text{cvx}}\,}

\global\long\def\lcp{\,+_{\text{lc}}\,}

\global\long\def\cvone#1{\one_{\left\{  #1\right\}  }^{\infty}}

\global\long\def\cvxone#1{\one_{ #1 }^{\infty}}

\global\long\def\epi#1{\text{epi}\left\{  #1\right\}  }

\global\long\def\sp{{\rm sp}}

\global\long\def\K{\mathcal{K}}

\global\long\def\A{\mathcal{A}}

\global\long\def\L{\mathcal{L}}

\global\long\def\P{\mathcal{P}}

\global\long\def\W{\mathcal{W}}

\global\long\def\iprod#1#2{\langle#1,\,#2\rangle}

\global\long\def\cvrs{{\rm Cvrs}}

\global\long\def\cvrsb{\overline{{\rm Cvrs}}}

\global\long\def\uball{B_{2}^{n}}

\global\long\def\conv{{\rm Conv}}

\global\long\def\Gauss{\gamma_{n}^{\sigma}}

\global\long\def\EXP#1{\EE_{{\textstyle #1}}}

\global\long\def\eps{\varepsilon}

\global\long\def\PP{\mathbb{P}}

\global\long\def\IndUball{\one_{B_{2}^{n}}}

\global\long\def\J{{\cal J}}

\title{a note on Santal\'{o} inequality for the polarity transform and
its reverse }

\author{Shiri Artstein-Avidan}

\author{Boaz A. Slomka}

\address{School of Mathematical Science, Tel-Aviv University, Ramat Aviv,
Tel Aviv, 69978, Israel}

\email{shiri@post.tau.ac.il (Artstein-Avidan), boazslom@post.tau.ac.il (Slomka)}

\thanks{This work was supported by ISF grant No. 247/11.}

\subjclass[2010]{52A41; 26A51; 46B10}

\keywords{Santal\'{o} and reverse Santal\'{o} inequality, polarity transform,
log-concave function}
\begin{abstract}
We prove a Santal\'{o} and a reverse Santal\'{o} inequality for
the class consisting of even log-concave functions attaining their
maximal value $1$ at the origin, also called even geometric log-concave
functions. We prove that there exist universal numerical constant
$c,C>0$ such that for any even geometric log-concave function $f=e^{-\varphi}$,
\[
c^{n}\cdot\left|B_{2}^{n}\right|^{2}\le\int_{\R^{n}}e^{-\varphi}\int_{\R^{n}}e^{-\varphi^{\circ}}\le\left(\left|B_{2}^{n}\right|n!\right)^{2}\left(1+\frac{C}{n}\right)
\]
where $B_{2}^{n}$ is the Euclidean unit ball of $\R^{n}$ and $\varphi^{\circ}$
is the polar function of $\varphi$ (not the Legendre transform!),
a transform which was recently re-discovered by Artstein-Avidan and
Milman, and is defined below. The bounds are sharp up to the optimal
constants $c,C$. 
\end{abstract}
\maketitle

\section{Introduction and main results}

Let $\R^{n}$ denote the  Euclidean $n$-dimensional real space, equipped
with the standard scalar product $\iprod{\cdot}{\cdot}$ and the standard
Euclidean norm $\left|x\right|=\iprod xx^{1/2}$. Denote its unit
ball by $B_{2}^{n}$. Let $K\sub\R^{n}$ be a convex body (i.e., compact
and containing the origin $0$ in its interior). The polar set of
$K$ is given by 
\[
K^{\circ}=\left\{ x\in\R\,:\,\sup_{y\in K}\iprod xy\le1\right\} .
\]
The set $K^{\circ}$ is also a convex body, and if $K$ is centrally
symmetric, so is its polar $K^{\circ}$. Denote the Lebesgue volume
of $K$ by $\left|K\right|$. Recall the classical Santal\'{o} inequality
and its reverse: there exists an absolute constant $c>0$ such that
for any dimension $n$ and every centrally symmetric convex body $K\sub\R^{n}$,
\[
c^{n}\cdot\left|B_{2}^{n}\right|^{2}\le\left|K\right|\left|K^{\circ}\right|\le\left|B_{2}^{n}\right|^{2}.
\]
The right hand side is referred to as Santal\'{o} inequality and
is due to Santal\'{o} \cite{Santalo49} (for a simpler proof see
\cite{MeyerPajor90}). The left hand side inequality is referred to
as the reverse Santal\'{o} inequality, or the Bourgain-Milman inequality
and is due to J. Bourgain and V. Milman \cite{BourgainMilman87}.
In fact, the left hand side inequality holds also for convex bodies
which are not centrally symmetric. While bounds for the numerical
constant $c$ have been improving over the years (see \cite{Kuperberg08,NazarovBM})
it is still an open question whether among centrally symmetric convex
bodies the cube is a minimizer for this product, often called the
Mahler product, and whether among general convex bodies a simplex
is a minimizer.\\

Functional versions of Santal\'{o} inequality and its reverse were
also established; A function $f:\R^{n}\to\R$ is said to be log-concave
if it is of the form $e^{-\varphi}$ where $\varphi:\R^{n}\to\R\cup\left\{ \infty\right\} $
is convex. One interpretation for the ``polar'' function of $e^{-\varphi}$
may be the function $e^{-\L\varphi}$ where $\L$ is the well-known
Legendre transform, given by $\L\varphi\left(x\right)=\sup_{y\in\R^{n}}\left[\iprod xy-\varphi\left(y\right)\right]$.
Note that $e^{-\L\varphi}$ is always log-concave. It was proven that
there exists an absolute constant $c>0$ such that for any even log-concave
function $f=e^{-\varphi}$ on $\R^{n}$ with $0<\int f<\infty$, 
\[
\left(\frac{2\pi}{c}\right)^{n}\le\int_{\R^{n}}e^{-\varphi}\cdot\int_{\R^{n}}e^{-\L\varphi}\le\left(2\pi\right)^{n}.
\]
The right hand side is due to Ball \cite{BallPhD}, see also Artstein,
Klartag, and Milman \cite{AKM2005}. In the latter paper it is actually
proved that Gaussians are the only maximizers for this product. For
further related results see also \cite{FradeliziMeyer07,Lehec-Direct09,Lehec-Partitions09}.
The left hand side is due to Klartag and Milman \cite{Klartag2005}.
Note that Santal\'{o} inequality and its reverse for centrally symmetric
convex bodies may be recovered by plugging in $f=e^{-\left\Vert \cdot\right\Vert _{K}^{2}/2}$,
where $\left\Vert \cdot\right\Vert _{K}$ denotes the norm with unit
ball $K$. In the sequel we shall often use $\left\Vert \cdot\right\Vert _{K}$
to denote the gauge function associated with a convex body $K$, even
in the non-symmetric case, so long as the origin belongs to the interior
of $K$. That is, $\left\Vert \cdot\right\Vert _{K}=\inf\left\{ r>0\,\,:\,\, x\in rK\right\} $.

In this note, we prove a different functional version of Santal\'{o}
inequality and its reverse, for an important subclass of (even) log-concave
functions, called\textit{ (even) geometric log-concave functions},
and consisting of all functions of the form $f=e^{-\varphi}$ where
$\varphi$ is a non-negative (even) lower semi-continuous convex function
with $\varphi\left(0\right)=0$. A few years ago, Artstein-Avidan
and Milman \cite{Artstein-Milman2010,AM2011} have proven that, on
this class, up to trivial obvious modifications, there exist exactly
two order-reversing involutions, one is the Legendre transform, and
the other, which we shall call here the polarity transform, is defined
by $\varphi\mapsto\varphi^{\circ}$ where 
\[
\varphi^{\circ}\left(x\right)=\sup_{y\in\R^{n}}\frac{\iprod xy-1}{\varphi\left(y\right)}.
\]
Here we agree that $\frac{+}{0}=\infty$, $\frac{0}{0}=0$ and $\frac{-}{0}=\left(\frac{-}{0}\right)_{+}=0$.
The latter is needed only for this definition to work with the identically
$0$ function. It seems that this transform has appeared only once
in the literature (Rockafeller's book \cite[p. 136]{Rockafellar-Book})
before it was re-discovered in \cite{Artstein-Milman2010,AM2011}.
Among other things, in the latter paper strong reasons were given
to explain why this new transform should be considered as the natural
extension of the notion of polarity from convex bodies to the class
of geometric log-concave functions, and the Legendre transform as
the natural extension of the support function. 

Since these discoveries, the first named author was asked in several
occasions whether a Santal\'{o} type inequality holds also for the
polarity transform. In this note we answer this question in the affirmative.
We prove the following:
\begin{thm}
\label{thm:Lc-Santalo}Let $f=e^{-\varphi}$ be an even geometric
log-concave function on $\R^{n}$ such that $0<\int e^{-\varphi}<\infty$.
Then 
\[
c^{n}\cdot\left|B_{2}^{n}\right|^{2}\le\int_{\R^{n}}e^{-\varphi}\cdot\int_{\R^{n}}e^{-\varphi^{\circ}}\le\left(\left|B_{2}^{n}\right|n!\right)^{2}\left(1+\frac{C}{n}\right)
\]
where $c,C>0$ are universal constants independent of $n$ and $\varphi$.
The left hand side inequality holds also without the assumption that
$\varphi$ is even. 
\end{thm}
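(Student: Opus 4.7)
The approach rests on a level-set identity relating $\varphi^\circ$ to the polars of the level sets of $\varphi$. For $t > 0$, set $K_t = \{\varphi \le t\}$, a centrally symmetric convex body when $\varphi$ is even, and $L_s = \{\varphi^\circ \le s\}$. Unpacking the definition, $\varphi^\circ(x) \le s$ is equivalent to $\langle x, y\rangle \le 1 + s\varphi(y)$ for every $y \in \R^n$, and splitting this across the level sets of $\varphi$ yields
\[
L_s = \bigcap_{t > 0} (1+st)\, K_t^\circ.
\]
In particular $L_s \subseteq (1 + st) K_t^\circ$ for every $s, t > 0$, and the near-converse $L_1 \supseteq K_1^\circ$ will drive the lower bound.

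For the upper bound, I would combine this inclusion with the layer-cake formula
\[
\int_{\R^n} e^{-\varphi}\, \int_{\R^n} e^{-\varphi^\circ} = \int_0^\infty \! \int_0^\infty |K_t|\, |L_s|\, e^{-t-s}\, dt\, ds.
\]
Plugging in $|L_s| \le (1+st)^n |K_t^\circ|$ and applying the classical Santal\'o inequality $|K_t| |K_t^\circ| \le |B_2^n|^2$ (valid since $K_t$ is centrally symmetric) bounds the integrand by $|B_2^n|^2 (1+st)^n e^{-t-s}$. Expanding the binomial and using $\int_0^\infty t^k e^{-t}\, dt = k!$ reduces the remaining double integral to $\sum_{k=0}^n \binom{n}{k}(k!)^2 = n! \sum_{j=0}^n (n-j)!/j!$. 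The $j=0$ term contributes exactly $(n!)^2$, while the bound $(n-j)!/n! \le 1/n$ for $j \ge 1$ yields a tail $\le (n!)^2 \cdot e/n$, producing the desired factor $(1+C/n)$.

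For the lower bound I would establish $L_1 \supseteq K_1^\circ$ directly from the level-set identity: for $x \in K_1^\circ$ and $y$ with $\varphi(y) = t \ge 1$, convexity and $\varphi(0) = 0$ give $y/t \in K_1$, so $\langle x, y\rangle \le t$ and the defining ratio is at most $1 - 1/t \le 1$; for $0 \le t < 1$ already $y \in K_1$, so $\langle x,y\rangle \le 1$ and the ratio is $\le 0$. Combined with $\int e^{-\varphi} \ge |K_1|/e$ and $\int e^{-\varphi^\circ} \ge |L_1|/e$ (both obtained by truncating the layer-cake integral to $t \ge 1$ and using monotonicity of the level sets), the Bourgain--Milman inequality applied to $K_1$ gives
\[
\int_{\R^n} e^{-\varphi}\, \int_{\R^n} e^{-\varphi^\circ} \;\ge\; \frac{|K_1|\,|K_1^\circ|}{e^2} \;\ge\; \frac{c_0^n}{e^2}\, |B_2^n|^2 \;\ge\; c^n\, |B_2^n|^2.
\]
Since Bourgain--Milman requires no symmetry hypothesis, this half extends to $\varphi$ not necessarily even, exactly as asserted.

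The main obstacle is not deep but careful: verifying the level-set identity together with the handling of the boundary cases $\varphi(y)=0$ and $\varphi(y)=\infty$ where the conventions $\tfrac{0}{0}=0$ and $\tfrac{+}{0}=\infty$ intervene, and checking that $K_t$ is a genuine convex body (bounded, with $0$ in the interior), which follows from $\varphi(0)=0$, convexity, and $0 < \int e^{-\varphi} < \infty$. Once the level-set formula is in hand, the classical Santal\'o and Bourgain--Milman inequalities, together with the elementary combinatorial estimate yielding the $(1+C/n)$ factor, finish the proof.
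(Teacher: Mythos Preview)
Your proposal is correct, and it rests on the same level-set inclusion as the paper (the paper's Proposition~\ref{prop:level-sets}: $(\underline{K}_{1/s}(\varphi))^\circ\subseteq\underline{K}_s(\varphi^\circ)\subseteq(st+1)(\underline{K}_t(\varphi))^\circ$), but you deploy it differently in both directions. For the upper bound the paper fixes a single parameter $t$, bounds $\int e^{-\varphi}$ and $\int e^{-\varphi^\circ}$ separately in terms of $|\underline{K}_t(\varphi)|$ and $|\underline{K}_t(\varphi)^\circ|$, applies Santal\'o once, and then optimizes over $t$ (arriving at the somewhat opaque choice $t=((n-1)!)^{1/(n+1)}$). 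Your route---writing the product as a double layer-cake integral, using $|L_s|\le(1+st)^n|K_t^\circ|$ with the \emph{running} $t$, and evaluating $\sum_k\binom{n}{k}(k!)^2$ via the binomial theorem---is cleaner and avoids any optimization; it even yields the explicit constant $C=e-1$. For the lower bound the paper again uses the full layer-cake, changes variables $u\mapsto 1/t$, applies Cauchy--Schwarz, and then Bourgain--Milman pointwise in $s$, obtaining the prefactor $\bigl(\int_0^\infty s^{-1}e^{-s/2-1/(2s)}\,ds\bigr)^2\approx 0.7$. Your single-level argument ($K_1^\circ\subseteq L_1$ together with $\int e^{-\varphi}\ge e^{-1}|K_1|$) is more elementary but loses a factor of $e^{-2}$ instead; since the statement only asks for a universal $c$, this is harmless. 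One small slip: the inequality you quote as ``$(n-j)!/n!\le 1/n$'' should read $(n-j)!\le (n-1)!$ for $j\ge1$, which is what you actually use.
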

\noindent Note that, up to optimal constants $c$ and $C$, one cannot
hope for better bounds since for indicators $\one_{K}$ of centrally
symmetric convex bodies $K\sub\R^{n}$ we recover the Bourgain-Milman
lower bound and for $\varphi=e^{-\left|x\right|}$ the above product
equals $n!^{2}\cdot\left|B_{2}^{n}\right|^{2}$. However, although
the product $\int\varphi\int\varphi^{\circ}$ is invariant under invertible
linear transformations, as one can check that $\left(\varphi\circ A\right)^{\circ}=\varphi^{\circ}\circ A^{-T}$,
it does not imply that this product has maximizers or minimizers.
We remark also that the fixed points of the mapping $\varphi\mapsto\varphi^{\circ}$
in dimension $1$ have been classified by L. Rotem in \cite{RotemFixed}.
The motivation for classifying these fixed points was that in the
convex body case the maximizer for the Mahler product is the unique
fixed point of the polarity transform for convex bodies, the Euclidean
ball. Thus the maximizer of the product considered in Theorem \ref{thm:Lc-Santalo},
if exists, might also be a fixed point, although this was not verified.
We do know to show, by symmetrizations, that maximizers, if exist,
must be rotationally invariant.

This note is organized as follows. In Section \ref{sec:Comparing-level-sets}
we prove some preliminary facts about log-concave functions, in particular
that the class of geometric log-concave functions with (finite) positive
integral is closed under the polarity transform, and a simple yet
useful generalization of a proposition which appeared in \cite{Milman-Rotem2012}
about the connection between level sets of a function and its polar.
The proof of Theorem \ref{thm:Lc-Santalo} is split into two propositions,
the right hand side inequality is proven in Section \ref{sec:Santaloineq}
and the left hand side inequality is proven in Section \ref{sec:ReverseSantaloIneq}.

\section{Preliminary facts}

\subsection{Comparing level sets of a function and its polar\label{sec:Comparing-level-sets}}

Let $f=e^{-\varphi}$ be a geometric log-concave function on $\R^{n}$.
We adopt the notation of \cite{Milman-Rotem2012} and denote the level
sets of $f$ by 
\[
\overline{K}_{t}\left(f\right):=\left\{ x\in\R^{n}\,:\, f\left(x\right)\ge t\right\} 
\]
for any $0<t\le1$. Similarly we denote the level sets of $\varphi$
by
\[
\underline{K}_{t}\left(\varphi\right):=\left\{ x\in\R^{n}\,:\,\varphi\left(x\right)\le t\right\} 
\]
for any $0\le t<\infty$. Note that $\overline{K}_{t}\left(f\right)$
is a closed convex set, and also that $\overline{K}_{t}\left(f\right)=\underline{K}_{\ln\left(1/t\right)}\left(\varphi\right).$
Moreover, note that if $f$ is even then $\overline{K}_{t}\left(f\right)$
are centrally symmetric. Denote the ray emanating from the origin
and passing through $x\in\R^{n}$ by $\R_{+}x=\left\{ \alpha x\,:\,\alpha\ge0\right\} .$
The following proposition is a simple yet useful generalization of
a proposition of V. Milman and L. Rotem  \cite[Proposition 11]{Milman-Rotem2012}.
\begin{prop}
\label{prop:level-sets}For any geometric convex function $\varphi:\R^{n}\to\left[0,\infty\right]$
and any $s,t>0$ we have that
\[
\left(\underline{K}_{1/s}\left(\varphi\right)\right)^{\circ}\sub\underline{K}_{s}\left(\varphi^{\circ}\right)\sub\left(st+1\right)\left(\underline{K}_{t}\left(\varphi\right)\right)^{\circ}.
\]
Moreover, if $\left(\underline{K}_{1/s}\left(\varphi\right)\right)^{\circ}=\underline{K}_{s}\left(\varphi^{\circ}\right)$
for all $s>0$ then 
\[
\varphi=\begin{cases}
0, & x\in L\\
\left\Vert x\right\Vert _{K}, & x\not\in L
\end{cases}
\]

\noindent for some, appropriate, convex sets $K,L$ containing the
origin. Equivalently, the restriction of $\varphi$ to every ray $\R_{+}x$
is either linear or a convex indicator $\one_{\left[0,a\right]}^{\infty}$.
Furthermore, if $\varphi$ is even, then either $\varphi=\one_{K}^{\infty}$
or $\varphi=\left\Vert \cdot\right\Vert _{K}$ for some convex set
$K$. 

\end{prop}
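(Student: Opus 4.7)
The plan splits into the two inclusions and the equality characterization.

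\textbf{Inclusions.} The key reformulation is $\varphi^{\circ}(x)\le s\iff\langle x,y\rangle\le 1+s\varphi(y)$ for every $y\in\R^n$, which is a direct rewriting of the definition of $\varphi^{\circ}$. The right inclusion $\underline{K}_{s}(\varphi^{\circ})\subseteq(1+st)(\underline{K}_{t}(\varphi))^{\circ}$ follows by evaluating this at a $y$ with $\varphi(y)\le t$: one gets $\langle x,y\rangle\le 1+st$, hence $x/(1+st)\in(\underline{K}_{t}(\varphi))^{\circ}$. For the left inclusion, given $x\in(\underline{K}_{1/s}(\varphi))^{\circ}$, the case $\varphi(y)\le 1/s$ is direct; for $\varphi(y)>1/s$ rescale to $y'=y/(s\varphi(y))\in\underline{K}_{1/s}(\varphi)$ using the convexity inequality $\varphi(ty)\le t\varphi(y)$ for $t\in[0,1]$ (a consequence of $\varphi(0)=0$), then $\langle x,y'\rangle\le 1$ gives $\langle x,y\rangle\le s\varphi(y)\le 1+s\varphi(y)$.

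\textbf{Characterization by contradiction.} Assume equality for every $s>0$, fix a ray $\R_+ y_0$, and let $g(\lambda):=\varphi(\lambda y_0)$, a convex function with $g(0)=0$. Suppose $g$ is neither linear on $[0,\infty)$ nor of indicator form. Since the secants $g(\lambda)/\lambda$ are non-decreasing, non-linearity yields some $\lambda^{*}>0$ with $0<g(\lambda^{*})<\infty$ at which some $\mu\in\partial g(\lambda^{*})$ strictly exceeds $g(\lambda^{*})/\lambda^{*}$. The tangent $y=g(\lambda^{*})+\mu(\lambda-\lambda^{*})$ then meets the horizontal axis at $\lambda_0:=\lambda^{*}-g(\lambda^{*})/\mu\in(0,\lambda^{*})$, and one has $\mu\lambda^{*}-g(\lambda^{*})=\mu\lambda_0$. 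Set $s_0:=1/(\mu\lambda_0)$ and pick $\xi\in\partial\varphi(\lambda^{*}y_0)$ with $\langle\xi,y_0\rangle=\mu$ (this exists because $\mu$ lies in the range of directional derivatives of $\varphi$ at $\lambda^{*}y_0$ in direction $y_0$). Put $x_0:=s_0\xi$. The subgradient inequality combined with the identities above gives $\langle x_0,y\rangle\le 1+s_0\varphi(y)$ for all $y$, so $\varphi^{\circ}(x_0)\le s_0$, and the test direction $y=\lambda^{*}y_0$ delivers the reverse inequality, so $\varphi^{\circ}(x_0)=s_0$. On the other hand, monotonicity of the secant slopes gives $g(\lambda_0)\le\lambda_0 g(\lambda^{*})/\lambda^{*}<\lambda_0\mu=1/s_0$, so by lower semicontinuity of $g$ there exists $b>\lambda_0$ with $g(b)\le 1/s_0$. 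Then $by_0\in\underline{K}_{1/s_0}(\varphi)$ but $\langle x_0,by_0\rangle=b/\lambda_0>1$, forcing $x_0\notin(\underline{K}_{1/s_0}(\varphi))^{\circ}$. This contradicts the assumed equality at $s=s_0$.

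\textbf{Global form.} Once the ray-wise dichotomy (linear or indicator on every ray) is established, $L:=\{\varphi=0\}$ is a convex set containing the origin, and on its complement $\varphi$ coincides with the gauge $\|\cdot\|_K$ of a suitable convex set $K$. In the even case, the coexistence of a linear ray $\R_+ u$ with $c_u>0$ and an indicator ray $\R_+ v$ with $a_v<\infty$ is incompatible with the ray-wise dichotomy: on the diagonal $\R_+(u+v)$, convexity forces $g_{u+v}$ to be finite and nontrivially linear for small $\lambda$ (using the convex combination with $u$) while jumping to $+\infty$ for $\lambda$ past a threshold (forced by $\varphi(\lambda v)=+\infty$), which is neither a linear nor an indicator restriction. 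Hence $\varphi=\|\cdot\|_K$ or $\varphi=\one_K^\infty$ globally. The main obstacle throughout is the subdifferential bookkeeping in the construction of $x_0$ (handling non-differentiability of $\varphi$ via directional derivatives rather than gradients); everything else is organized by the single identity $\varphi^{\circ}(x)\le s\iff\langle x,y\rangle\le 1+s\varphi(y)$.
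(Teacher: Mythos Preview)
Your proof is correct and takes a genuinely different route from the paper's.

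\textbf{Inclusions.} The paper sandwiches $\varphi$ between $\hat{\min}(t\|\cdot\|_{\underline{K}_t(\varphi)},\one^\infty_{\underline{K}_t(\varphi)})$ and $\max(t\|\cdot\|_{\underline{K}_t(\varphi)},\one^\infty_{\underline{K}_t(\varphi)})$, applies the polarity transform using its algebraic Facts 1--3, and reads off the level sets of the resulting auxiliary functions $\psi_1^t,\psi_2^t$. You instead work directly with the equivalence $\varphi^\circ(x)\le s\iff\langle x,y\rangle\le 1+s\varphi(y)$ for all $y$, which gives the inclusions in two lines. Your argument is more elementary; the paper's auxiliary functions $\psi_1^t,\psi_2^t$ pay off later by giving explicit pointwise formulas that organize the equality analysis and immediately transfer to the Legendre-transform corollary.

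\textbf{Equality case.} The paper characterizes when $\varphi^\circ=\hat{\inf}_t\psi_2^t$, reduces to dimension one, and then uses the self-duality of the equality condition (it holds for $\varphi$ iff it holds for $\varphi^\circ$) to force $\varphi$ to be simultaneously of two dual forms, which together yield the ray-wise dichotomy. Your approach is a direct contradiction: from a ray where the subgradient strictly exceeds the secant slope at some $\lambda^*$, you build an explicit witness $(x_0,s_0)$ with $x_0\in\underline{K}_{s_0}(\varphi^\circ)\setminus(\underline{K}_{1/s_0}(\varphi))^\circ$. This is cleaner and avoids the duality trick.

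Two small points. First, your appeal to lower semicontinuity to find $b>\lambda_0$ with $g(b)\le 1/s_0$ should really be to the continuity of a convex function on the interior of its domain (lsc gives closedness of sublevel sets, not what you need here); since $\lambda_0<\lambda^*$ and $g(\lambda^*)<\infty$, $\lambda_0$ is interior and you are fine. Second, the case where $g$ is linear with positive slope on $[0,a)$ and jumps to $+\infty$ at $a$ forces $\lambda^*=a$, a boundary point of $\operatorname{dom}\varphi$; the existence of $\xi\in\partial\varphi(ay_0)$ with prescribed $\langle\xi,y_0\rangle=\mu>c$ then needs the normal cone to $\operatorname{dom}\varphi$ at $ay_0$ to have positive $y_0$-component, which does hold since moving along $y_0$ leaves the domain. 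You correctly flag this bookkeeping as the main obstacle; it is resolvable but deserves a line.
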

\noindent For $t=1/s$ Proposition \ref{prop:level-sets} reads $\left(\underline{K}_{1/s}\left(\varphi\right)\right)^{\circ}\sub\underline{K}_{s}\left(\varphi^{\circ}\right)\sub2\left(\underline{K}_{1/s}\left(\varphi\right)\right)^{\circ}$
which is exactly \cite[Proposition 11]{Milman-Rotem2012}. For the
reader's convenience, we provide a geometric proof for both inclusions
in Proposition \ref{prop:level-sets} . To this end, we need to recall
the following simple facts about the polarity transform (for proofs
see e.g., \cite{AM2011}):\\
\begin{facts}

\noindent For any geometric convex functions $\varphi,\psi$ on $\R^{n}$
we have that

\begin{enumerate}[1.]

\item$\varphi\le\psi$ if and only if $\psi^{\circ}\le\varphi^{\circ}.$
Also $\left(\varphi^{\circ}\right)^{\circ}=\varphi$.\label{enu:fact 1}

\item$\max\left(\varphi,\psi\right)=\hat{\min}\left(\varphi^{\circ},\psi^{\circ}\right)$
where $\hat{\min}\left(\varphi,\psi\right)=\sup\left\{ \phi\:\,\text{{\rm a geometric convex function}}\,:\,\phi\le\varphi\text{ and }\mbox{\ensuremath{\phi\le\psi}}\right\} $\label{fact 2}.

\item For any convex body $K$ which includes the origin and any
$t>0$, $\left(t\left\Vert \cdot\right\Vert _{K}\right)^{\circ}=\frac{1}{t}\left\Vert \cdot\right\Vert _{K^{\circ}}$
and $\left(\one_{K}^{\infty}\right)^{\circ}=\one_{K^{\circ}}^{\infty}$,
where $\one_{K}^{\infty}=-\log\left(\one_{K}\right)$.\label{fact3}\\

\end{enumerate}

\noindent \end{facts}
\begin{proof}
[Proof of Proposition \ref{prop:level-sets}] Fix $t>0$. Since $\varphi$
is a geometric convex function,  it readily follows that for all $x\in\underline{K}_{t}\left(\varphi\right)$,
\[
\one_{\underline{K}_{t}}^{\infty}\left(x\right)\le\varphi\left(x\right)\le t\cdot\left\Vert x\right\Vert _{\underline{K}_{t}\left(\varphi\right)}
\]
and for all $x\not\in\underline{K}_{t}\left(\varphi\right)$, 
\[
t\cdot\left\Vert x\right\Vert _{\underline{K}_{t}\left(\varphi\right)}\le\varphi\left(x\right)\le\one_{\underline{K}_{t}}^{\infty}\left(x\right).
\]
Thus, the following inequality holds:

\[
\hat{\min}\left(t\cdot\left\Vert \cdot\right\Vert _{\underline{K}_{t}\left(\varphi\right)},\,\one_{\underline{K}_{t}\left(\varphi\right)}^{\infty}\right)\le\varphi\le\max\left(t\cdot\left\Vert \cdot\right\Vert _{\underline{K}_{t}\left(\varphi\right)},\,\one_{\underline{K}_{t}\left(\varphi\right)}^{\infty}\right).
\]
\\
Applying the inequality with the polar transform and using Facts \ref{enu:fact 1}-\ref{fact3}
yields 

\noindent 
\[
\psi_{1}^{t}:=\hat{\min}\left(\frac{1}{t}\cdot\left\Vert \cdot\right\Vert _{\underline{K}_{t}^{\circ}\left(\varphi\right)},\,\one_{\underline{K}_{t}^{\circ}\left(\varphi\right)}^{\infty}\right)\le\varphi^{\circ}\le\max\left(\frac{1}{t}\cdot\left\Vert \cdot\right\Vert _{\underline{K}_{t}^{\circ}\left(\varphi\right)},\,\one_{\underline{K}_{t}^{\circ}\left(\varphi\right)}^{\infty}\right)=:\psi_{2}^{t}.
\]
\\
As illustrated in Fig. \ref{fig:Level-Sets}, 
\begin{figure}[h]
\subfloat[\label{fig:level1} $\underline{K}_{s}^{\circ}\left(\varphi\right)\sub\frac{s+1/t}{1/t}\cdot\underline{K}_{t}\left(\varphi^{\circ}\right)$]{\begin{centering}
\includegraphics[width=6cm]{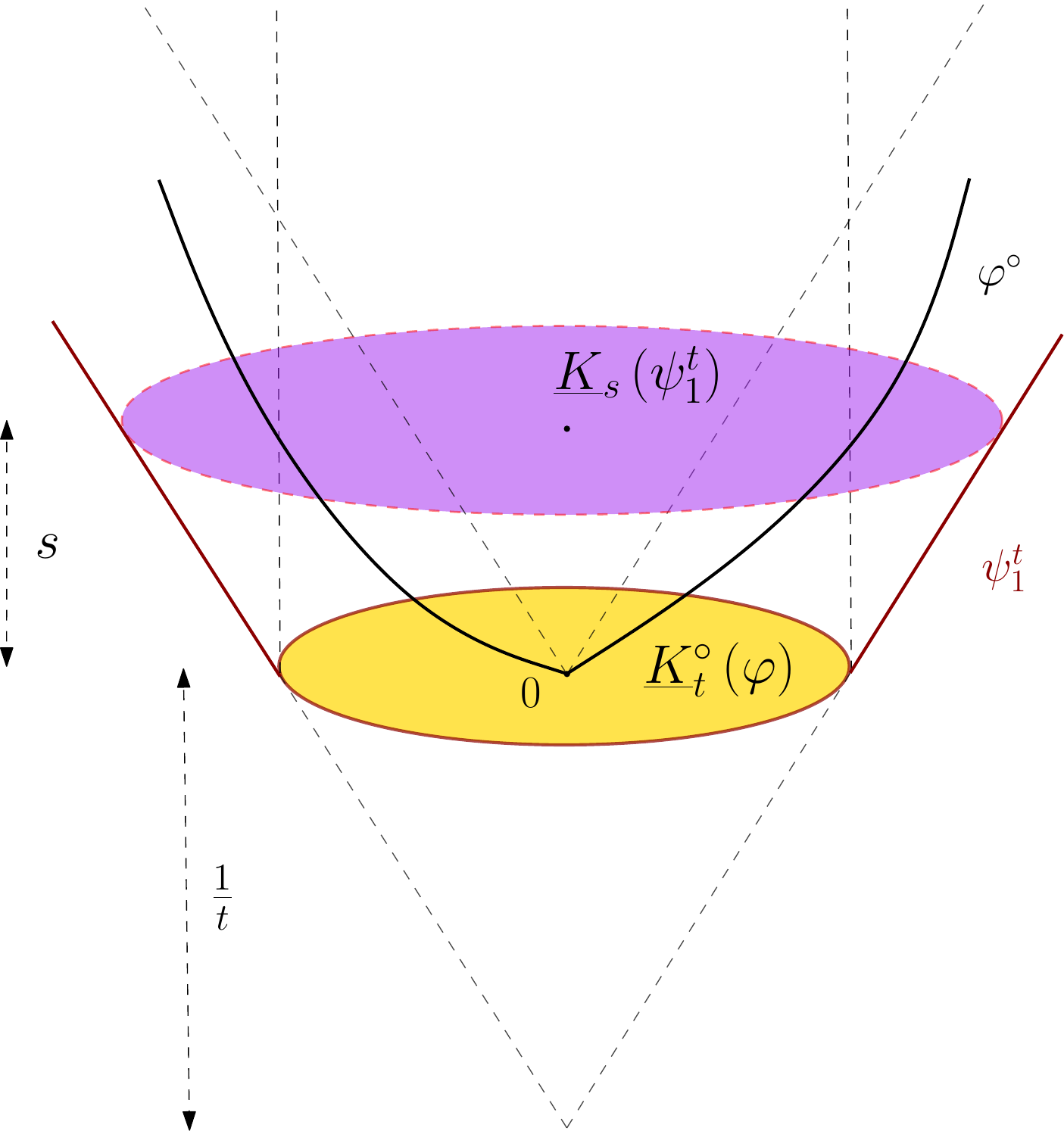}
\par\end{centering}

}\hfill{}\subfloat[$\underline{K}_{1/t}\left(\varphi\right)^{\circ}\sub\underline{K}_{s}\left(\varphi^{\circ}\right)$]{\begin{centering}
\includegraphics[width=6cm]{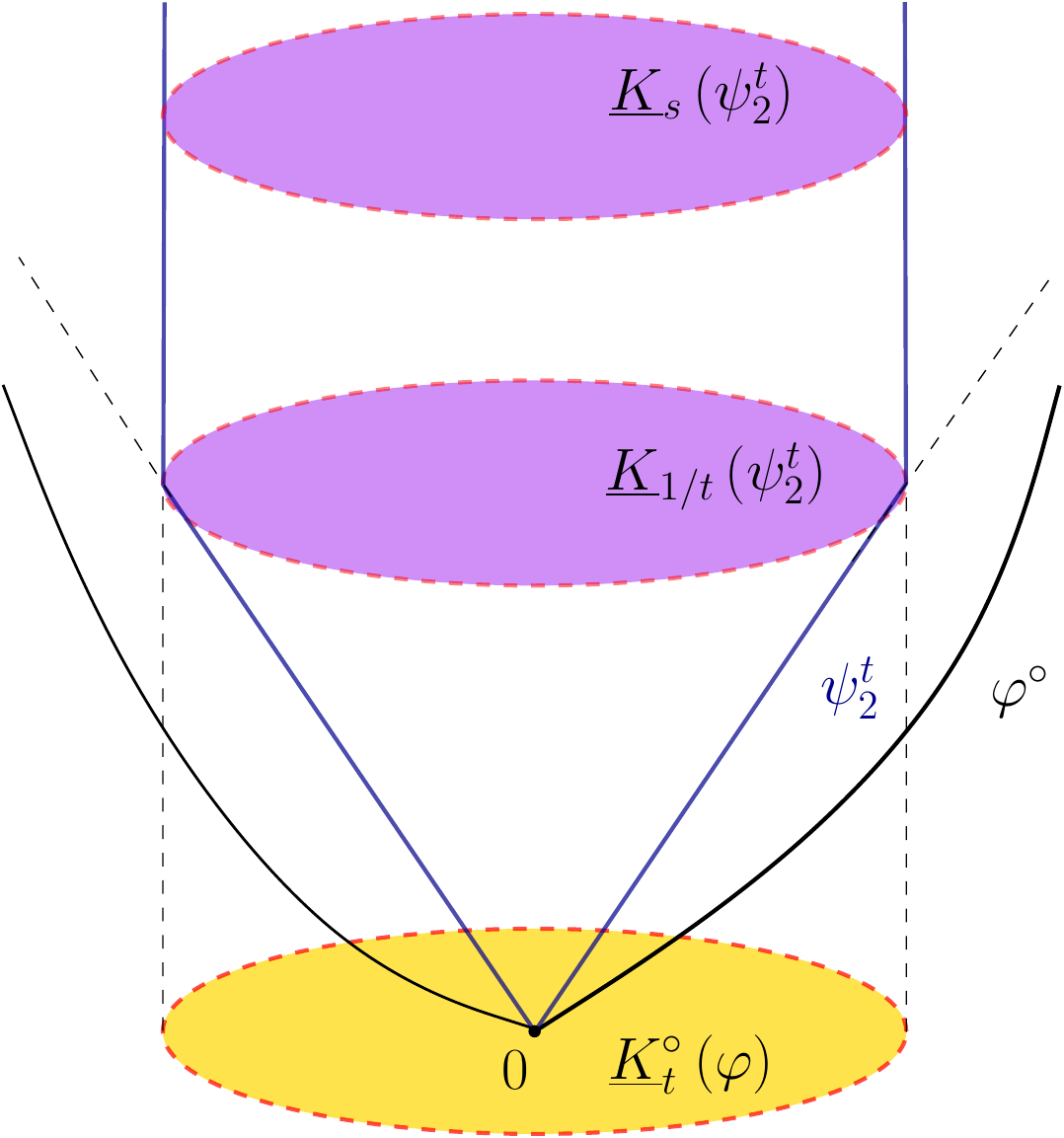}
\par\end{centering}

}

\caption{\label{fig:Level-Sets}Comparing level sets of $\varphi$ and $\varphi^{\circ}$
in Proposition \ref{prop:level-sets}}
\end{figure}
we have that $\underline{K}_{s}\left(\varphi^{\circ}\right)\sub\underline{K}_{s}\left(\psi_{1}^{t}\right)$
and $\underline{K}_{s}\left(\psi_{1}^{t}\right)=\left(st+1\right)\underline{K}_{t}\left(\varphi\right)^{\circ}$
for all $s>0$ and $\underline{K}_{s}\left(\psi_{2}^{t}\right)\sub\underline{K}_{s}\left(\varphi^{\circ}\right)$
and $\underline{K}_{s}\left(\psi_{2}^{t}\right)=\underline{K}_{t}^{\circ}\left(\varphi\right)$
for all $s\ge\frac{1}{t}$, as desired. Equivalently, one may verify
that 
\[
\psi_{1}^{t}\left(x\right)=\begin{cases}
0, & x\in\underline{K}_{t}\left(\varphi\right)^{\circ}\\
\frac{1}{t}\left\Vert x\right\Vert _{\underline{K}_{t}\left(\varphi\right)^{\circ}}-\frac{1}{t}, & x\not\in\underline{K}_{t}\left(\varphi\right)^{\circ}
\end{cases},\,\,\psi_{2}^{t}\left(x\right)=\begin{cases}
\frac{1}{t}\left\Vert x\right\Vert _{\underline{K}_{t}\left(\varphi\right)^{\circ}}, & x\in\underline{K}_{t}\left(\varphi\right)^{\circ}\\
+\infty, & x\not\in\underline{K}_{t}\left(\varphi\right)^{\circ}
\end{cases}
\]
\\
which together with $\psi_{1}^{t}\le\varphi^{\circ}\le\psi_{2}^{t}$,
implies the same desired inclusions. 

Next, we deal with the equality case of the left hand side inequality;
we wish to find all possible forms of a function $\varphi$ that satisfies
\begin{equation}
\left(\underline{K}_{1/s}\left(\varphi\right)\right)^{\circ}=\underline{K}_{s}\left(\varphi^{\circ}\right)\,\,\,\text{for all\,\,\,\ }s>0.\label{eq:EqualityEq}
\end{equation}
 Note that, so far, we have that $\varphi^{\circ}\le\psi_{2}:=\hat{\inf}_{t>0}\psi_{2}^{t}$,
or equivalently $\varphi\ge\psi_{2}^{\circ}=\sup_{t>0}\left(\psi_{2}^{t}\right)^{\circ}$.
First, we prove that $\psi_{2}^{\circ}\equiv\varphi$ if and only
if 
\begin{equation}
\varphi=\begin{cases}
\left\Vert x\right\Vert _{K}, & x\in L\\
+\infty, & x\not\in L
\end{cases}\label{eq:Eqform1}
\end{equation}
for some convex sets $K$ and $L$ containing the origin. Indeed,
it is not hard to check that such $\varphi$'s satisfy the desired
equality. For the other direction, note that this claim is point-wise
since
\[
\left(\psi_{2}^{t}\right)^{\circ}\left(x\right)=\begin{cases}
0, & x\in\underline{K}_{t}\left(\varphi\right)\\
t\left\Vert x\right\Vert _{\underline{K}_{t}\left(\varphi\right)}-t, & x\not\in\underline{K}_{t}\left(\varphi\right)
\end{cases}
\]
and so we may assume without loss of generality that the dimension
is $n=1$. Let us show that there are no other possible forms of $\varphi$.
Assume that $\varphi$ is not of the claimed form. Then without loss
of generality (otherwise take $\varphi\left(-x\right)$), there exist
$0<x_{1}<x_{2}$ such that $\varphi'\left(x_{1}\right)<\varphi'\left(x_{2}\right)$
(and the derivatives exist). Let $0<x_{t}$ denote the point (if exists)
for which $\varphi\left(x_{t}\right)=t$. Then for every $0<t\le\varphi\left(x_{1}\right)$
we have that 
\[
\left(\psi_{2}^{t}\right)^{\circ}\left(x_{2}\right)=\frac{\varphi\left(x_{t}\right)}{x_{t}}\left(x_{2}-x_{t}\right)\le\frac{\varphi\left(x_{1}\right)}{x_{1}}x_{2}<\varphi\left(x_{2}\right),
\]
for every $\varphi\left(x_{1}\right)<t<\varphi\left(x_{2}\right)$
we have that 
\[
\left(\psi_{2}^{t}\right)^{\circ}\left(x_{2}\right)=\frac{\varphi\left(x_{t}\right)}{x_{t}}\left(x_{2}-x_{t}\right)\le\frac{\varphi\left(x_{2}\right)}{x_{2}}\left(x_{2}-x_{1}\right)<\varphi\left(x_{2}\right)
\]
and for every $t\ge\varphi\left(x_{2}\right)$ we have that $\left(\psi_{2}^{t}\right)^{\circ}\left(x_{2}\right)=0$.
Thus, $\psi_{2}^{\circ}\left(x_{2}\right)\le\sup\left(\frac{\varphi\left(x_{1}\right)}{x_{1}}x_{2},\,\frac{\varphi\left(x_{2}\right)}{x_{2}}\left(x_{2}-x_{1}\right)\right)<\varphi\left(x_{2}\right)$,
as claimed. Note that if $\psi_{2}\neq\varphi^{\circ}$ then $\underline{K}_{s_{0}}\left(\psi_{2}\right)\subsetneqq\underline{K}_{s_{0}}\left(\varphi^{\circ}\right)$
for some $s_{0}>0$ and so, 
\[
\underline{K}_{1/s_{0}}\left(\varphi\right)^{\circ}=\underline{K}_{s_{0}}\left(\psi_{2}^{1/s_{0}}\right)\sub\underline{K}_{s_{0}}\left(\psi_{2}\right)\subsetneqq\underline{K}_{s_{0}}\left(\varphi^{\circ}\right).
\]
Therefore, all geometric convex functions $\varphi$ satisfying (\ref{eq:EqualityEq})
are of the form (\ref{eq:Eqform1}) for some convex sets $K$ and
$L$ containing the origin. Moreover, note that $\varphi$ satisfies
(\ref{eq:EqualityEq}) if and only if $\varphi^{\circ}$ satisfies
(\ref{eq:EqualityEq}) and thus it follows that $\varphi^{\circ}$
is of the form (\ref{eq:Eqform1}) as well. However, we have that
\begin{equation}
\varphi^{\circ}=\begin{cases}
0, & x\in L^{\circ}\\
\left\Vert x\right\Vert _{K^{\circ}}\left(1-\frac{1}{\left\Vert x\right\Vert _{L^{\circ}}}\right), & x\not\in L^{\circ}
\end{cases},\label{eq:EqForm2}
\end{equation}
which means that if $\varphi$ satisfies (\ref{eq:EqualityEq}) then
it is simultaneously of the form (\ref{eq:Eqform1}) and of the form
(\ref{eq:EqForm2}) (with different $K,L$ in each form). It is not
hard to check that if $\varphi$ is of both forms then its restriction
to every ray $\R_{+}x$ is either linear or a convex indicator function
$\one_{\left[0,a\right]}^{\infty}$ (the first form excludes the possibility
of attaining $0$ on a segment and then linear, and latter excludes
the possibility of attaining linear values on a segment and then attaining
$+\infty$). Let $\varphi$ be a geometric convex function such that
its restriction to every ray $\R_{+}x$ is either linear or a convex
indicator. Then $\varphi$ is of both form (\ref{eq:Eqform1}) and
of form (\ref{eq:EqForm2}). Indeed, define $L=\left\{ \varphi=0\right\} $,
and $K$ by: 
\[
K\cap\R_{+}x=\begin{cases}
\left\{ 0\right\} , & \exists y\in\R_{+}x\,\,\varphi\left(y\right)=\infty\\
\left\{ x\,:\,\varphi\left(x\right)\le1\right\} \cap\R_{+}x, & o/w
\end{cases}.
\]
Then, one can verify that 
\begin{equation}
\varphi\left(x\right)=\begin{cases}
0, & x\in L\\
\left\Vert x\right\Vert _{K}, & x\not\in L
\end{cases}\label{eq:EqForm3}
\end{equation}
(which is a special case of (\ref{eq:EqForm2}); in every direction
$\varphi$ is either linear or a convex indicator) and that $K$ is
indeed convex (due to the convexity of $\varphi$). Similarly we
may define $L'={\rm Supp}\left\{ \varphi\right\} $ and $K'$ by
\[
K'\cap\R_{+}x=\begin{cases}
\left\{ 0\right\} , & \exists y\in\R_{+}x\,\,\varphi\left(y\right)=\infty\\
\left\{ x\,:\,\varphi\left(x\right)\le1\right\} , & o/w
\end{cases}.
\]
Again, one can check that $\varphi$ is of the form (\ref{eq:Eqform1})
with $L',K'$ in the roles of $L,K$ and that $K'$ is convex. Concluding
the above, we have that if a geometric convex function $\varphi$
satisfies (\ref{eq:EqualityEq}) then its restriction to every ray
$\R_{+}x$ is either linear or a convex indicator, and in particular,
it is both of the form (\ref{eq:Eqform1}) and of form (\ref{eq:EqForm3}).
Conversely, the restriction of each function of the form (\ref{eq:EqForm3})
to every ray is either linear or a convex indicator.\\

We have left to show that such functions indeed satisfy (\ref{eq:EqualityEq}).
To this end, let $\varphi$ satisfy (\ref{eq:EqualityEq}). Then both
$\varphi$ and $\varphi^{\circ}$ are of the form (\ref{eq:Eqform1}).
Suppose 
\[
\varphi=\begin{cases}
\left\Vert x\right\Vert _{K}, & x\in L\\
+\infty, & x\not\in L
\end{cases}
\]
for appropriate convex sets $K,L$. Then, on the one hand we have
that 
\[
\underline{K}_{1/s}\left(\varphi\right)=\left\{ \left\Vert x\right\Vert _{K}\le1/s\right\} \cap L=\left(\frac{1}{s}K\right)\cap L.
\]
On the other hand, since $\varphi^{\circ}$ is of the form (\ref{eq:Eqform1}),
it follows that 
\[
\varphi^{\circ}=\begin{cases}
0, & x\in L^{\circ}\\
\left\Vert x\right\Vert _{K^{\circ}}\left(1-\frac{1}{\left\Vert x\right\Vert _{L^{\circ}}}\right), & x\not\in L^{\circ}
\end{cases}=\begin{cases}
0, & x\in L^{\circ}\\
\left\Vert x\right\Vert _{K^{\circ}}, & x\not\in L^{\circ}
\end{cases}.
\]
Thus 
\[
\underline{K}_{s}\left(\varphi^{\circ}\right)=\conv\left(\left\{ \left\Vert x\right\Vert _{K^{\circ}}\le s\right\} \cup L^{\circ}\right)=\conv\left(\left(sK\right)\cup L^{\circ}\right)
\]
and so $\underline{K}_{s}\left(\varphi^{\circ}\right)=\underline{K}_{1/s}\left(\varphi\right)^{\circ}$
for all $s>0$ as required. Finally, it is not hard to check, that
if $\varphi$ is even, then $\varphi$ must be either a norm or a
convex indicator of a convex set.
\end{proof}
\noindent As a consequence, one also gets the following comparison
of level sets of a function and the level sets of their Legendre transform.
It turns out that this is a special case of \cite[Lemma 8]{FradMeyer2008}:
\begin{cor}
\label{cor:LegendreLevel-Sets}Let $\varphi\in\cvxo.$ Then for every
$s>0$ and every $t>0$ we have
\[
s\cdot\left(\underline{K}_{s}\left(\varphi\right)\right)^{\circ}\sub\underline{K}_{s}\left(\L\varphi\right)\sub\left(s+t\right)\cdot\left(\underline{K}_{t}\left(\varphi\right)\right)^{\circ}.
\]
Moreover, if $s\cdot\left(\underline{K}_{s}\left(\varphi\right)\right)^{\circ}=\underline{K}_{s}\left(\L\varphi\right)$
for all $s>0$ then either $\varphi=\one_{K}^{\infty}$ or $\varphi=\left\Vert \cdot\right\Vert _{K}$
where $K$ is a closed convex set containing the origin.\end{cor}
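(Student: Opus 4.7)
The plan is to prove both inclusions directly from the definitions of the polar set and the Legendre transform, using only convexity of $\varphi$ and the normalization $\varphi(0)=0$, and then to handle the equality case by mimicking the argument from the equality portion of Proposition \ref{prop:level-sets}.

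For the left inclusion, I would pick $x \in s\left(\underline{K}_{s}(\varphi)\right)^{\circ}$, which by definition means $\iprod{x}{y} \le s$ for every $y$ with $\varphi(y) \le s$. To bound $\L\varphi(x) = \sup_{y}[\iprod{x}{y} - \varphi(y)]$, I would split according to whether $\varphi(y) \le s$ or $\varphi(y) > s$. In the first case, $\iprod{x}{y} - \varphi(y) \le s - 0 = s$. In the second case, the key move is to rescale: setting $\lambda = s/\varphi(y) \in (0,1)$ and using convexity together with $\varphi(0)=0$ yields $\varphi(\lambda y) \le \lambda \varphi(y) + (1-\lambda)\varphi(0) = s$, so $\lambda y \in \underline{K}_{s}(\varphi)$, hence $\iprod{x}{\lambda y} \le s$, which gives $\iprod{x}{y} \le \varphi(y)$ and the Fenchel-Young quantity is $\le 0 \le s$. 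Either way, $\L\varphi(x) \le s$.

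For the right inclusion, I would just use the Fenchel-Young inequality: if $\L\varphi(x) \le s$ and $y \in \underline{K}_{t}(\varphi)$, then $\iprod{x}{y} \le \L\varphi(x) + \varphi(y) \le s + t$, so $x/(s+t) \in \left(\underline{K}_{t}(\varphi)\right)^{\circ}$. Both arguments are short; the content is really in the careful use of convexity plus $\varphi(0)=0$ in the left inclusion.

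For the equality case, I would follow the same blueprint used in the equality discussion of Proposition \ref{prop:level-sets}: show that the identity $s(\underline{K}_{s}(\varphi))^{\circ} = \underline{K}_{s}(\L\varphi)$ for all $s>0$ forces the two inequalities in the case split above to be tight on every ray, and deduce that $\varphi$ restricted to each ray $\R_+u$ is either linear or a geometric indicator $\one_{[0,a]}^{\infty}$, leading to the stated dichotomy. The main obstacle is precisely this equality step: unlike $\varphi \mapsto \varphi^{\circ}$, the Legendre transform mixes information across directions, so the one-dimensional reduction is more delicate. A useful observation I would exploit is that the identity is self-dual---if $\varphi$ satisfies it, then applying polarity to both sides and using $\L\L\varphi = \varphi$ together with $(U^{\circ})^{\circ}=U$ for closed convex $U\ni 0$ shows that $\L\varphi$ satisfies the same identity. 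Thus both $\varphi$ and $\L\varphi$ inherit the ray-wise linear-or-indicator structure, which together with the Legendre duality between gauges and convex indicators ($\L\|\cdot\|_{K} = \one_{K^{\circ}}^{\infty}$ and $\L\one_{K}^{\infty} = \|\cdot\|_{K^{\circ}}$) should pin down the claimed form. Alternatively, one can simply invoke \cite[Lemma 8]{FradMeyer2008}, of which this is a special case, as the author notes.
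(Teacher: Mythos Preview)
Your direct proofs of the two inclusions are correct and take a genuinely different route from the paper's. The paper does not argue directly; instead it establishes the identity
\[
\underline{K}_{c}(\L\varphi)=c\cdot\underline{K}_{1/c}(\varphi^{\circ})
\]
by a straightforward rewriting of the defining inequalities, and then simply substitutes this into Proposition~\ref{prop:level-sets}: replacing $s$ there by $1/s$ and multiplying through by $s$ yields the Corollary at once. Your argument is more self-contained---it never mentions the polarity transform $\varphi^{\circ}$, only Fenchel--Young and the rescaling $\lambda=s/\varphi(y)$---which is pleasant and arguably more elementary. The paper's approach, in exchange, exposes the exact relationship between the level sets of $\L\varphi$ and those of $\varphi^{\circ}$, which is of independent interest.

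Where the paper's route wins decisively is the equality case. Via the same identity, the hypothesis $s\,(\underline{K}_{s}(\varphi))^{\circ}=\underline{K}_{s}(\L\varphi)$ for all $s>0$ is \emph{literally} the hypothesis $(\underline{K}_{1/s}(\varphi))^{\circ}=\underline{K}_{s}(\varphi^{\circ})$ for all $s>0$, so the equality characterization is inherited verbatim from Proposition~\ref{prop:level-sets} with no further work. Your sketch, by contrast, correctly notes self-duality under $\L$, but the assertion that equality ``forces the two inequalities in the case split above to be tight on every ray'' is not justified: tightness of the supremum defining $\L\varphi(x)$ at some $y$ does not localize information to the ray $\R_{+}y$, precisely because the Legendre transform mixes directions, as you yourself flag. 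This is a genuine gap. The cleanest fix is exactly the level-set identity above, which collapses the equality case to that of Proposition~\ref{prop:level-sets}; citing \cite[Lemma~8]{FradMeyer2008} is also fine but less informative.
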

\begin{proof}
This follows from the fact, pointed out by V. Milman, that the level
sets of $\L\varphi$ and $\varphi^{\circ}$ are connected intimately.
Indeed, note that for every $c>0$ we have
\begin{align*}
\underline{K}_{c}\left(\L\varphi\right)=\left\{ x\in\R^{n}\,;\,\left(\L\varphi\right)\le c\right\}  & =\left\{ x\in\R^{n}\,;\,\forall\, y\in\R^{n},\,\iprod xy-\varphi\left(y\right)\le c\right\} \\
 & =\left\{ x\in\R^{n}\,;\,\forall\, y\in\R^{n},\,\frac{\iprod xy-c}{\varphi\left(y\right)}\le1\right\} \\
 & =\left\{ x\in\R^{n}\,;\,\forall\, y\in\R^{n},\, c\cdot\frac{\iprod{\frac{x}{c}}y-1}{\varphi\left(y\right)}\le1\right\} \\
 & =\left\{ cx\in\R^{n}\,;\,\forall\, y\in\R^{n},\,\frac{\iprod xy-1}{\varphi\left(y\right)}\le\frac{1}{c}\right\} \\
 & =c\cdot\left\{ x\in\R^{n}\,;\,\left(\A\varphi\right)\le\frac{1}{c}\right\} =c\cdot\underline{K}_{\frac{1}{c}}\left(\varphi^{\circ}\right).
\end{align*}
Together with Proposition \ref{prop:level-sets}, the proof is thus
complete.
\end{proof}

\subsection{Log-concave functions with finite positive integral }

In this section we examine some properties of geometric log-concave
functions $f$ satisfying that $0<\int f<\infty$. As later on we
will be connecting the integral of such a function to an integral
over its level sets, it will be helpful to show that its level sets
are convex bodies, that is (closed) convex sets with a finite positive
volume. This will allow us to apply the classical Santal\'{o} (or
reverse Santal\'{o}) inequality for convex bodies in our setting.
To this end, denote the support of a log-concave function $f$ by
${\rm supp}\left(f\right)=\left\{ x:\R^{n}\,:\, f\left(x\right)>0\right\} $.
The following holds. 
\begin{lem}
Let $f=e^{-\varphi}$ be a geometric log-concave function on $\R^{n}$
. Then \label{lem:Integrable} $0<\int f<\infty$ if and only if for
some open ball $B\sub\R^{n}$ and $0<\eps<1$, $\eps\cdot\one_{B}\left(x\right)\le f\left(x\right)$
and for some $r,c>0$, $f\left(x\right)\le e^{-{\textstyle {\scriptstyle c\left|x\right|}}}$
for all $\left|x\right|\ge r$.\end{lem}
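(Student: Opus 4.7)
My plan is to handle the ``if'' direction by a direct splitting of the integral, and the ``only if'' direction by producing separately a pointwise lower bound on some ball and an exponential upper bound at infinity. The ``if'' direction is immediate: the pointwise inequality $f\geq\varepsilon\one_{B}$ yields $\int f\geq\varepsilon|B|>0$, while the decomposition $\int f=\int_{|x|<r}f+\int_{|x|\geq r}f$ together with $f\leq 1$ on $rB_{2}^{n}$ (since $\varphi\geq0$) and the hypothesis $f(x)\leq e^{-c|x|}$ for $|x|\geq r$ gives $\int f\leq|rB_{2}^{n}|+\int_{|x|\geq r}e^{-c|x|}\,dx<\infty$.

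For the ``only if'' direction, assume $0<\int f<\infty$. Positivity of the integral forces $\{\varphi<\infty\}$ to have positive Lebesgue measure, and being convex it has nonempty interior. Any point $x_{0}$ in this interior is a point of continuity for the convex function $\varphi$, so $\varphi\leq M$ on some small ball $B=B(x_{0},r)$, yielding $f\geq e^{-M}\one_{B}$; this is the desired lower bound.

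For the exponential decay at infinity, I would first produce a bounded sublevel set. Taking $s>M$, the convex set $K=\{\varphi\leq s\}$ contains $B$ and hence has nonempty interior; if $K$ were unbounded then, being a closed convex set with nonempty interior, $K$ would contain a translate of an interior ball shifted along a recession ray, forcing $|K|=\infty$ and contradicting $\int f\geq e^{-s}|K|<\infty$. Hence $K\subset R\cdot B_{2}^{n}$ for some $R>0$. On the compact sphere $2R\cdot\Sph^{n-1}$ we then have $\varphi>s$ pointwise, and by lower semi-continuity on a compact set the infimum $m:=\inf_{|y|=2R}\varphi(y)$ is attained and satisfies $m>s$ (the case $m=+\infty$ being trivial). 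Now use the elementary convexity inequality $\varphi(ty)\geq t\varphi(y)$ for $t\geq1$, which follows from $\varphi(0)=0$ and the convex combination $y=(1/t)(ty)+(1-1/t)\cdot0$. Applied with $y=2Rx/|x|$ and $t=|x|/(2R)\geq 1$, this yields $\varphi(x)\geq (m/(2R))|x|$ for $|x|\geq 2R$, i.e.\ $f(x)\leq e^{-c|x|}$ with $c=m/(2R)$.

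The only real obstacle is producing the bounded sublevel set: once this is in hand, the linear growth of $\varphi$ at infinity follows from a routine one-dimensional convexity argument along rays through the origin, together with lower semi-continuity on the sphere. Assembling the two estimates gives the claim.
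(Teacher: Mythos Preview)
Your proof is correct and follows essentially the same architecture as the paper's: the ``if'' direction by direct splitting, the lower bound from an interior ball of the support, and the exponential decay from convexity along rays through the origin once a sublevel set is known to be bounded. The only variation is in establishing that boundedness: you use the Chebyshev-type estimate $\int f\geq e^{-s}\left|\{\varphi\leq s\}\right|$ together with a recession-cone argument, whereas the paper argues that if $f>e^{-1}$ along an entire ray then log-concavity forces $f\geq\varepsilon$ on the (infinite-volume) convex hull of that ray with the ball $B$; after this step the radial inequality $\varphi(tx)\geq t\varphi(x)$ for $t\geq1$ (equivalently $f(rx/|x|)\geq f(x)^{r/|x|}$) is used identically in both.
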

\begin{proof}
Assume that $0<\int f<\infty$. The fact that $0<\int f$ obviously
implies that ${\rm supp}\left(f\right)$ is of positive Lebesgue measure.
Since ${\rm supp}\left(f\right)$ is a convex set, and $f$ is continuous
on its support, there exist $0<\eps<e^{-1}$ and an open ball $B\sub{\rm supp}\left(f\right)$
such that $f\left(B\right)\ge\eps$. Denote the unit sphere in $\R^{n}$
by $\Sph^{n-1}=\left\{ \theta\in\R^{n}\,:\,\left|\theta\right|=1\right\} $.
Then for each $\theta\in\Sph^{n-1}$ there exists $R>0$ such that
$f\left(R\theta\right)\le e^{-1}$. Indeed, otherwise we would have
that $f\left(\left\{ \theta r\,:\, r\ge0\right\} \right)>e^{-1}$
for some $\theta\in\Sph^{n-1}$and so, by the log-concavity of $f$,
$f\ge\eps$ on the convex hull of $B$ and the ray $\left\{ \theta r\,:\, r\ge0\right\} $,
which has infinite Lebesgue measure, and so $\int f=\infty$, a contradiction.
Hence, by the compactness of $\Sph^{n-1}$ there exists $r=\min\left\{ R>0\,:\, f\left(\theta R\right)\le e^{-1}\,\text{{\rm for all }}\,\theta\in\Sph^{n-1}\right\} $,
and so for every $x\in\R^{n}$ with $\left|x\right|\ge r$,
\[
e^{-1}\ge f\left(\frac{r}{\left|x\right|}x\right)=f\left(\frac{r}{\left|x\right|}x+\left(1-\frac{r}{\left|x\right|}\right)0\right)\ge f\left(x\right)^{{\textstyle {\scriptstyle r/\left|x\right|}}}f\left(0\right)^{1-r/\left|x\right|}=f\left(x\right)^{r/\left|x\right|}
\]
from which it follows that $f\le e^{-\left|x\right|/r}$ for all $\left|x\right|\ge r$. 

For the opposite direction we have:
\[
0<\int\eps\cdot\one_{B}<\int e^{-\varphi}<\int_{\left|x\right|<r}e^{-\varphi}+\int_{\left|x\right|\ge r}e^{-\varphi}\le\left|rB_{2}^{n}\right|+\int e^{-c\left|x\right|}<\infty
\]
\end{proof}
\begin{rem}
One may check that the above lemma is equivalent to the fact that
a geometric log-concave function $f$ has a finite positive integral
if and only if its support is of full dimension and $f$ does not
attain the constant value $1$ on a whole ray.
\end{rem}
As a consequence of the above lemma we can conclude that the level
sets of a geometric log-concave function $f=e^{-\varphi}$ satisfying
that $0<\int f<\infty$ are convex bodies, and that the class of such
functions is closed under the polarity transform, that is $0<\int e^{-\varphi^{\circ}}<\infty$:
\begin{prop}
\label{prop:LevelSetBodies}Let $f=e^{-\varphi}$ be a geometric log-concave
function on $\R^{n}$, satisfying that $0<\int f<\infty$. Then for
all $t>0$, $\underline{K}_{t}\left(\varphi\right)$ and $\underline{K}_{t}\left(\varphi^{\circ}\right)$
are compact convex sets with non-vanishing Lebesgue volume. Moreover,
$0<\int e^{-\varphi^{\circ}}<\infty$.\end{prop}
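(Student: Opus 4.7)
The strategy is to extract quantitative geometric information from Lemma \ref{lem:Integrable}, push it through the two-sided inclusion in Proposition \ref{prop:level-sets}, and then feed the result back through Lemma \ref{lem:Integrable} to conclude $0<\int e^{-\varphi^{\circ}}<\infty$.

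I would first handle $\underline{K}_{t}(\varphi)$. Lower semi-continuity and convexity of $\varphi$ give closedness and convexity of the sublevel set. Lemma \ref{lem:Integrable} provides constants $r,c>0$ with $\varphi(x)\ge c|x|$ for $|x|\ge r$, which forces $\underline{K}_{t}(\varphi)$ to be bounded. For positive volume, the same lemma yields an open ball $B$ and $\eps>0$ such that $\varphi\le M:=-\log\eps$ on $B$; since $\varphi(0)=0$, convexity gives $\varphi(\lambda y)\le\lambda M$ for all $y\in B$ and $\lambda\in[0,1]$, so $\lambda B\sub\underline{K}_{\lambda M}(\varphi)$, and choosing $\lambda=\min(1,t/M)$ exhibits a Euclidean ball of positive volume inside $\underline{K}_{t}(\varphi)$.

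Next, I would transfer these properties to $\underline{K}_{t}(\varphi^{\circ})$ using Proposition \ref{prop:level-sets}. Positive volume is the easy half: from $\underline{K}_{t}(\varphi^{\circ})\supseteq(\underline{K}_{1/t}(\varphi))^{\circ}$, and since $\underline{K}_{1/t}(\varphi)$ is a bounded convex set containing $0$, its polar contains a neighborhood of the origin and so has positive volume. For boundedness of $\underline{K}_{t}(\varphi^{\circ})$, the inclusion $\underline{K}_{t}(\varphi^{\circ})\sub(ts+1)(\underline{K}_{s}(\varphi))^{\circ}$ reduces matters to finding $s>0$ such that $(\underline{K}_{s}(\varphi))^{\circ}$ is bounded, equivalently, such that $0$ lies in the interior of $\underline{K}_{s}(\varphi)$. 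In the even case this is automatic: $\underline{K}_{s}(\varphi)$ is symmetric and has nonempty interior (by the previous step), hence contains $0$ in its interior. Consequently $(\underline{K}_{s}(\varphi))^{\circ}\sub\delta^{-1}B_{2}^{n}$ for some $\delta>0$, yielding boundedness of $\underline{K}_{t}(\varphi^{\circ})$.

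To conclude $0<\int e^{-\varphi^{\circ}}<\infty$, I would apply Lemma \ref{lem:Integrable} in the reverse direction to $e^{-\varphi^{\circ}}$. The open ball inside $\underline{K}_{M'}(\varphi^{\circ})$ constructed in the previous paragraph gives $e^{-\varphi^{\circ}}\ge e^{-M'}\one_{B'}$, while the bound $\underline{K}_{R}(\varphi^{\circ})\sub(Rs+1)\delta^{-1}B_{2}^{n}$ translates into the linear growth estimate $\varphi^{\circ}(x)\ge(|x|\delta-1)/s$ for $|x|$ large, which Lemma \ref{lem:Integrable} then converts into $\int e^{-\varphi^{\circ}}<\infty$. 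The main obstacle throughout is the boundedness step in the previous paragraph: it hinges on $\underline{K}_{s}(\varphi)$ containing the origin in its interior, which is precisely where the symmetry of $\varphi$ (or, more generally, $0$ lying in the interior of the effective domain) is needed.
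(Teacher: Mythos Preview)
Your approach mirrors the paper's: both extract the ball and exponential-decay bounds from Lemma \ref{lem:Integrable} to show $\underline{K}_t(\varphi)$ is a convex body, transfer to $\underline{K}_t(\varphi^\circ)$ via the two-sided inclusion of Proposition \ref{prop:level-sets}, and then feed the resulting bounds back through Lemma \ref{lem:Integrable} to conclude $0<\int e^{-\varphi^\circ}<\infty$. You are in fact more careful than the paper on one point: the boundedness of $\underline{K}_t(\varphi^\circ)$ via the right inclusion of Proposition \ref{prop:level-sets} requires $0\in\mathrm{int}\,\underline{K}_s(\varphi)$, which the paper uses without comment---and your flag is well placed, since the proposition as stated (without evenness) actually fails, e.g.\ for $\varphi=\one_{[0,1]}^\infty$ in dimension one, where $\varphi^\circ=\one_{(-\infty,1]}^\infty$ has non-compact sublevel sets and infinite integral.
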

\begin{proof}
By Lemma \ref{lem:Integrable}, $f\left(x\right)\le e^{-c\left|x\right|}$
for all $\left|x\right|\ge r$ and $\eps\cdot\one_{B}\le f$ for some
$0<\eps<1$ and an open ball $B$. Let $t>0$ such that $x\in\overline{K}_{t}\left(f\right)$
for some $\left|x\right|\ge r$. Since $f\left(x\right)\le e^{-c\left|x\right|}$
for all $\left|x\right|\ge r$, it follows that $x\in\overline{K}_{t}\left(e^{-c\left|\cdot\right|}\right)$
and so $\left|\overline{K}_{t}\left(f\right)\right|\le\left|\overline{K}_{t}\left(e^{-c\left|\cdot\right|}\right)\right|<\infty$.
By the same reasoning the last inequality holds for all $s<t$. As
$\overline{K}_{t}\left(f\right)$ is monotonically decreasing with
$t$, we have that $\left|\overline{K}_{t}\left(f\right)\right|<\infty$
for all $0<t<1$. In other words $\left|\underline{K}_{t}\left(\varphi\right)\right|<\infty$
for all $t>0$. Since $\eps\cdot\one_{B}\le f$ it follows that $\overline{K}_{t}\left(\eps\cdot\one_{B}\right)\sub\overline{K}_{t}\left(f\right)$,
and so $0<\left|\overline{K}_{t}\left(\eps\cdot\one_{B}\right)\right|\le\left|\overline{K}_{t}\left(f\right)\right|$,
for all $0<t\le\eps$. In other words $0<\left|\underline{K}_{t}\left(\varphi\right)\right|$
for all $a:=\log\left(1/\eps\right)\le t$. As $\varphi$ is a geometric
log-concave function, we have that $\varphi\left(x\right)\le a\cdot\left\Vert x\right\Vert _{\underline{K}_{a}\left(\varphi\right)}$
for all $x\in\underline{K}_{a}\left(\varphi\right)$ which means that
$\underline{K}_{t}\left(a\cdot\left\Vert x\right\Vert _{\underline{K}_{a}\left(\varphi\right)}\right)\sub\underline{K}_{t}\left(\varphi\right)$
for all $0<t<a$ and hence $0<\left|\underline{K}_{t}\left(\varphi\right)\right|$
for all $0<t$. Proposition \ref{prop:level-sets} then implies that
\[
0<\left|\underline{K}_{1/t}\left(\varphi\right)\right|\le\left|\underline{K}_{t}\left(\varphi^{\circ}\right)\right|\le2^{n}\cdot\left|\underline{K}_{1/t}\left(\varphi\right)\right|<\infty.
\]
Denote $g\left(x\right)=e^{-\varphi^{\circ}\left(x\right)}$. Since
$0<\left|\underline{K}_{1}\left(\varphi^{\circ}\right)\right|<\infty$
and $\underline{K}_{1}\left(\varphi^{\circ}\right)$ is convex, there
exist an open ball $B$ and a radius $r>0$ such that $B\sub\underline{K}_{1}\left(\varphi^{\circ}\right)\sub rB_{2}^{n}$,
and so on the one hand 
\[
\frac{1}{e}\cdot\one_{B}\le\frac{1}{e}\cdot\one_{\underline{K}_{1}\left(\varphi^{\circ}\right)}=\frac{1}{e}\cdot\one_{\overline{K}_{1/e}\left(g\right)}\le g=e^{-\varphi^{\circ}}.
\]
On the other hand, for some $c>0$ we have that $\varphi^{\circ}\left(x\right)\ge\left\Vert x\right\Vert _{\underline{K}_{1}\left(\varphi^{\circ}\right)}\ge c\left|x\right|$
for all $x\not\in\underline{K}_{1}\left(\varphi^{\circ}\right)$ and
in particular for all $\left|x\right|\ge r$. In other words, for
all $\left|x\right|\ge r$ we have that $e^{-\varphi^{\circ}}\le e^{-c\left|x\right|}$.
Thus, Lemma \ref{lem:Integrable} implies that $0<\int e^{-\varphi^{\circ}}<\infty$.
\end{proof}

\section{\label{sec:Santaloineq}Santal\'{o} inequality for the polarity
transform}
\begin{prop}
\label{prop:Santalo}Let $\varphi$ be a geometric even convex function
on $\R^{n}$ with $0<\int e^{-\varphi}<\infty$. Then 
\[
\int_{\R^{n}}e^{-\varphi}\cdot\int_{\R^{n}}e^{-\varphi^{\circ}}\le\left(\left|B_{2}^{n}\right|n!\right)^{2}\left(1+\frac{C}{n}\right)
\]
for some universal constant $C>0$ independent of $n$ and $\varphi$.\end{prop}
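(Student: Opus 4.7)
The plan is to use the layer-cake formula to rewrite both integrals in terms of level sets, bound the resulting integrand pointwise via Proposition \ref{prop:level-sets} and the classical Santal\'{o} inequality, and close with an elementary combinatorial estimate. Specifically, writing $e^{-\varphi(x)}=\int_{0}^{\infty}\one_{\{\varphi(x)\le t\}}e^{-t}\,dt$ and applying Fubini gives
\[
\int_{\R^{n}}e^{-\varphi}\cdot\int_{\R^{n}}e^{-\varphi^{\circ}}=\int_{0}^{\infty}\!\!\int_{0}^{\infty}\bigl|\underline{K}_{t}(\varphi)\bigr|\cdot\bigl|\underline{K}_{s}(\varphi^{\circ})\bigr|\,e^{-t-s}\,dt\,ds.
\]
By Proposition \ref{prop:LevelSetBodies}, each level set is a convex body of finite positive volume; since $\varphi$ is even and $\varphi(0)=0$, each $\underline{K}_{t}(\varphi)$ is centrally symmetric with $0$ in its interior, so polarity and classical Santal\'{o} apply to it.

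For every pair $(s,t)$, Proposition \ref{prop:level-sets} yields $\underline{K}_{s}(\varphi^{\circ})\subseteq(1+st)\bigl(\underline{K}_{t}(\varphi)\bigr)^{\circ}$, hence $|\underline{K}_{s}(\varphi^{\circ})|\le(1+st)^{n}\,|(\underline{K}_{t}(\varphi))^{\circ}|$. Combined with the classical Santal\'{o} inequality $|K|\,|K^{\circ}|\le|B_{2}^{n}|^{2}$ applied to $K=\underline{K}_{t}(\varphi)$, this bounds the integrand pointwise by $(1+st)^{n}|B_{2}^{n}|^{2}$. Expanding $(1+st)^{n}=\sum_{k=0}^{n}\binom{n}{k}(st)^{k}$ and using $\int_{0}^{\infty}t^{k}e^{-t}\,dt=k!$, the double integral evaluates explicitly and the theorem reduces to the combinatorial estimate
\[
\sum_{k=0}^{n}\binom{n}{k}(k!)^{2}\le(n!)^{2}\Bigl(1+\frac{C}{n}\Bigr).
\]

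To finish, I rewrite $\binom{n}{k}(k!)^{2}=n!\,k!/(n-k)!$ and reindex $j=n-k$ to get $(n!)^{2}\sum_{j=0}^{n}1/(j!\,n^{\underline{j}})$ with $n^{\underline{j}}=n(n-1)\cdots(n-j+1)$. The $j=0$ and $j=1$ terms contribute $1+1/n$; for every $j\ge 2$ one has $j!\,n^{\underline{j}}\ge 2n(n-1)$, so the remaining at most $n-1$ terms contribute at most $(n-1)/(2n(n-1))=1/(2n)$, giving $C=3/2$. I do not foresee a serious obstacle in this scheme: the content is entirely located in Proposition \ref{prop:level-sets}, whose sharp dilation factor $(1+st)$ is exactly compensated by the factorial gain from the Gamma integrals, leaving only the $1+O(1/n)$ correction demanded by the statement. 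The asymptotic sharpness is consistent with the extremal example $\varphi(x)=|x|$, for which $\varphi^{\circ}=\varphi$ and the product equals $(n!\,|B_{2}^{n}|)^{2}$.
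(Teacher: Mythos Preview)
Your argument is correct and uses the same three ingredients as the paper---the layer-cake representation, the level-set inclusion of Proposition~\ref{prop:level-sets}, and the classical Santal\'o inequality---but you organize them more symmetrically. The paper fixes a single parameter $t$, bounds $\int e^{-\varphi}$ and $\int e^{-\varphi^\circ}$ separately in terms of $\underline{K}_t(\varphi)$, applies Santal\'o once, and then optimizes the resulting expression $e^{1/t}\bigl(t^n/n!+1\bigr)$ over $t$ (choosing $t=((n-1)!)^{1/(n+1)}$ and performing some slightly delicate asymptotic estimates). You instead keep the full double integral, apply Proposition~\ref{prop:level-sets} pointwise with the moving parameter $t$ itself, and reduce everything to the closed-form sum $\sum_{k=0}^{n}\binom{n}{k}(k!)^{2}$, which you bound directly. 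Your route is cleaner: it avoids the optimization step entirely and yields the explicit constant $C=3/2$, whereas the paper's constant is left implicit. The two proofs are morally the same, but yours packages the information more efficiently.
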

\begin{proof}
We have that 
\begin{align*}
\int_{\R^{n}}e^{-\varphi} & =\int_{0}^{1}\left|\overline{K}_{t}\left(e^{-\varphi}\right)\right|dt=\int_{0}^{1}\left|\underline{K}_{\ln\left(1/t\right)}\left(\varphi\right)\right|dt=\int_{0}^{\infty}e^{-s}\cdot\left|\underline{K}_{s}\left(\varphi\right)\right|ds.
\end{align*}
Fix $t>0$. By the convexity of $\varphi$, we have that $\varphi\left(x\right)\ge t\cdot\left\Vert x\right\Vert _{\underline{K}_{t}\left(\varphi\right)}$
for all $x\not\in\underline{K}_{t}\left(\varphi\right)$, from which
it follows that $\underline{K}_{s}\left(\varphi\right)\sub$$\underline{K}_{s}\left(t\left\Vert \cdot\right\Vert _{\underline{K}_{t}\left(\varphi\right)}\right)$
for all $s>t$. Thus,
\begin{align*}
\int_{0}^{\infty}e^{-s}\cdot\left|\underline{K}_{s}\left(\varphi\right)\right|ds & =\int_{0}^{t}e^{-s}\cdot\left|\underline{K}_{s}\left(\varphi\right)\right|ds+\int_{t}^{\infty}e^{-s}\cdot\left|\underline{K}_{s}\left(\varphi\right)\right|ds\\
 & \le\left|\underline{K}_{t}\left(\varphi\right)\right|\cdot\int_{0}^{t}e^{-s}ds+\int_{t}^{\infty}e^{-s}\cdot\left|\underline{K}_{s}\left(t\cdot\left\Vert \cdot\right\Vert _{\underline{K}_{t}\left(\varphi\right)}\right)\right|ds\\
 & =\left|\underline{K}_{t}\left(\varphi\right)\right|(1-e^{-t})+t^{-n}\left|\underline{K}_{t}\left(\varphi\right)\right|\cdot\int_{1}^{\infty}e^{-s}\cdot s^{n}\cdot ds\\
 & =\left|\underline{K}_{t}\left(\varphi\right)\right|\left[(1-e^{-t})+t^{-n}n!\right].
\end{align*}
On the other hand, we may Proposition \ref{prop:level-sets} and a
change of variables to obtain that 
\[
\int_{0}^{\infty}e^{-s}\left|\underline{K}_{s}\left(\varphi^{\circ}\right)\right|ds\le\int_{0}^{\infty}e^{-s}\left(st+1\right)^{n}\left|\underline{K}_{t}\left(\varphi\right)^{\circ}\right|ds=\left|\underline{K}_{t}\left(\varphi\right)^{\circ}\right|\int_{0}^{\infty}e^{-s'+\frac{1}{t}}s'^{n}t^{n}ds=e^{\frac{1}{t}}t^{n}\left|\underline{K}_{t}\left(\varphi\right)^{\circ}\right|n!.
\]
By Proposition \ref{prop:LevelSetBodies}, $\underline{K}_{t}\left(\varphi\right)$
and $\underline{K}_{t}\left(\varphi\right)^{\circ}$ are convex bodies.
Moreover, since $\varphi$ is even, $\underline{K}_{1}\left(\varphi\right)$
and $\underline{K}_{1}\left(\varphi\right)^{\circ}$ are centrally
symmetric, and so we may apply the classical Santal\'{o} inequality
together with the above inequalities to obtain that
\begin{align*}
\int_{\R^{n}}e^{-\varphi}\cdot\int_{\R^{n}}e^{-\varphi^{\circ}} & \le\left(\left|B_{2}^{n}\right|n!\right)^{2}\left(\frac{\left(1-e^{-t}\right)e^{\frac{1}{t}}t^{n}}{n!}+e^{\frac{1}{t}}\right)\le\left(\left|B_{2}^{n}\right|n!\right)^{2}e^{\frac{1}{t}}\left(\frac{t^{n}}{n!}+1\right).
\end{align*}
Note that $\left(n(n+1)\right)^{\frac{1}{n+1}}\le3$ and $\frac{n+1}{\sqrt[n+1]{\left(n+1\right)!}}\le e$
for all $n\ge1$, and so, for $t=\sqrt[n+1]{\left(n-1\right)!}$ we
get that 
\[
e^{\frac{1}{\sqrt[n+1]{\left(n-1\right)!}}}\le1+e\left(\left(n-1\right)!\right)^{-\frac{1}{n+1}}=1+e\left(n\left(n+1\right)\right)^{\frac{1}{n+1}}\left(\left(n+1\right)!\right)^{-\frac{1}{n+1}}\le1+\frac{3e^{2}}{n+1}.
\]
Moreover, we have that $e^{\frac{1}{\sqrt[n+1]{\left(n-1\right)!}}}\frac{\left(\sqrt[n+1]{\left(n-1\right)!}\right)^{n}}{n!}\le\frac{e}{n}$,
and so we may conclude that
\[
\int_{\R^{n}}e^{-\varphi}\cdot\int_{\R^{n}}e^{-\varphi^{\circ}}\le\left(\left|B_{2}^{n}\right|n!\right)^{2}\left(1+\frac{C}{n}\right).
\]
\[
\]
\end{proof}
\begin{rem*}
As Joseph Lehec suggested to the authors after reading a first draft
of this note, one may alternatively use Ball's argument (\cite{BallPhD},
see also \cite{FradeliziMeyer07}) to prove essentially the same inequality.
Namely, the fact that for every $\varphi\in\cvxo$, 
\[
\frac{\varphi\left(x\right)+\varphi^{\circ}\left(y\right)}{2}\ge\sqrt{\varphi\left(x\right)\varphi^{\circ}\left(y\right)}=\sqrt{\sup_{z\in\R^{n}}\varphi\left(x\right)\cdot\frac{\iprod zy-1}{\varphi\left(z\right)}}\ge\sqrt{\left(\iprod xy-1\right)_{+}}
\]
implies, by Ball's argument, that 
\[
\int_{\R^{n}}e^{-\varphi}\int_{\R^{n}}e^{-\varphi^{\circ}}\le\left(\int_{\R^{n}}e^{-\sqrt{\left(\left|x\right|^{2}-1\right)_{+}}}\right)^{2}
\]
which is qualitatively the same as our bound. 
\end{rem*}

\section{\label{sec:ReverseSantaloIneq}Reverse Santal\'{o} inequality for
the polarity transform}
\begin{prop}
\label{prop:revSantalo}Let $\varphi$ be a geometric convex function
on $\R^{n}$ with $0<\int e^{-\varphi}<\infty$. Then 
\[
0.7\cdot c^{n}\cdot\left|B_{2}^{n}\right|^{2}\le\int_{\R^{n}}e^{-\varphi}\cdot\int_{\R^{n}}e^{-\varphi^{\circ}}
\]
where $c>0$ is the best constant for the classical reverse Santal\'{o}
inequality for convex bodies.\end{prop}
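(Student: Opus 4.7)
The plan is to couple the two integrals through the classical reverse Santal\'{o} (Bourgain--Milman) inequality applied at \emph{every} level set of $\varphi$, and then use Cauchy--Schwarz to reduce the proof to a single universal one-variable integral. Concretely, by the layer-cake formula together with the left inclusion $(\underline{K}_{1/s}(\varphi))^{\circ}\subseteq \underline{K}_{s}(\varphi^{\circ})$ of Proposition \ref{prop:level-sets},
\[
\int_{\R^{n}}e^{-\varphi^{\circ}}=\int_{0}^{\infty}e^{-s}\,\bigl|\underline{K}_{s}(\varphi^{\circ})\bigr|\,ds\;\ge\;\int_{0}^{\infty}e^{-s}\,\bigl|\underline{K}_{1/s}(\varphi)^{\circ}\bigr|\,ds,
\]
and the change of variable $u=1/s$ rewrites this as $\int_{0}^{\infty}u^{-2}e^{-1/u}\,|\underline{K}_{u}(\varphi)^{\circ}|\,du$. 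For each $u>0$, Proposition \ref{prop:LevelSetBodies} guarantees that $\underline{K}_{u}(\varphi)$ is a convex body of positive finite volume, so the classical Bourgain--Milman inequality (with the best constant $c$) gives $|\underline{K}_{u}(\varphi)^{\circ}|\ge c^{n}|B_{2}^{n}|^{2}/|\underline{K}_{u}(\varphi)|$, and hence
\[
\int_{\R^{n}}e^{-\varphi^{\circ}}\;\ge\;c^{n}|B_{2}^{n}|^{2}\int_{0}^{\infty}\frac{e^{-1/u}}{u^{2}\,|\underline{K}_{u}(\varphi)|}\,du.
\]

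Next I would couple this with $\int_{\R^{n}}e^{-\varphi}=\int_{0}^{\infty}e^{-t}\,|\underline{K}_{t}(\varphi)|\,dt$ via the Cauchy--Schwarz inequality on $(0,\infty)$, applied to $\sqrt{e^{-t}|\underline{K}_{t}(\varphi)|}$ and $\sqrt{e^{-1/t}/(t^{2}|\underline{K}_{t}(\varphi)|)}$. The decisive feature of this reciprocal pairing is that the unknown volumes $|\underline{K}_{t}(\varphi)|$ cancel inside the square root, so one is left with
\[
\int_{\R^{n}}e^{-\varphi}\cdot\int_{\R^{n}}e^{-\varphi^{\circ}}\;\ge\;c^{n}|B_{2}^{n}|^{2}\left(\int_{0}^{\infty}\frac{e^{-(t+1/t)/2}}{t}\,dt\right)^{2}.
\]

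The integral that remains is a universal constant: substituting $t=e^{u}$ (so that $dt/t=du$ and $(t+1/t)/2=\cosh u$) turns it into $\int_{-\infty}^{\infty}e^{-\cosh u}\,du=2K_{0}(1)$, where $K_{0}$ denotes the modified Bessel function of the second kind of order zero. Since $K_{0}(1)\approx 0.4210$, we get $(2K_{0}(1))^{2}\approx 0.7091\ge 0.7$, producing the claimed constant. No evenness of $\varphi$ is needed, since Bourgain--Milman holds for any convex body with the origin in its interior; the conceptual core of the argument---and the main obstacle to notice---is the Cauchy--Schwarz coupling by reciprocal variables that eliminates all $\varphi$-dependence and isolates a single Bessel-type integral.
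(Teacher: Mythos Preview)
Your proof is correct and follows essentially the same approach as the paper: layer-cake representation, the left inclusion of Proposition~\ref{prop:level-sets}, the reciprocal change of variables, Cauchy--Schwarz, and the classical Bourgain--Milman inequality at each level, yielding the same universal integral $\int_{0}^{\infty}t^{-1}e^{-(t+1/t)/2}\,dt$. The only cosmetic difference is that you apply Bourgain--Milman before Cauchy--Schwarz rather than after, and you additionally identify the constant as $(2K_{0}(1))^{2}$, which the paper merely evaluates numerically.
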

\begin{proof}
As in the proof of Proposition \ref{prop:Santalo}, we have that $\int_{\R^{n}}e^{-\varphi}=\int_{0}^{\infty}e^{-s}\cdot\left|\underline{K}_{s}\left(\varphi\right)\right|ds$.
By Proposition \ref{prop:LevelSetBodies}, $\underline{K}_{t}\left(\varphi\right)$
and $\underline{K}_{t}\left(\varphi^{\circ}\right)$ are convex bodies
(for which the classical reverse Santal\'{o} applies) and so by Proposition
\ref{prop:level-sets}, Cauchy-Schwarz inequality, and the classical
reverse Santal\'{o} inequality, we obtain that
\begin{align*}
\int_{\R^{n}}e^{-\varphi}\cdot\int_{\R^{n}}e^{-\varphi^{\circ}} & =\int_{0}^{\infty}e^{-s}\cdot\left|\underline{K}_{s}\left(\varphi\right)\right|ds\cdot\int_{0}^{\infty}e^{-u}\cdot\left|\underline{K}_{u}\left(\varphi^{\circ}\right)\right|du\\
 & \ge\int_{0}^{\infty}e^{-s}\cdot\left|\underline{K}_{s}\left(\varphi\right)\right|ds\cdot\int_{0}^{\infty}e^{-u}\cdot\left|\underline{K}_{1/u}\left(\varphi\right)^{\circ}\right|du\\
 & =\int_{0}^{\infty}e^{-s}\cdot\left|\underline{K}_{s}\left(\varphi\right)\right|ds\cdot\int_{0}^{\infty}\frac{e^{-1/t}}{t^{2}}\cdot\left|\underline{K}_{t}\left(\varphi\right)^{\circ}\right|dt\\
 & \ge\left(\int_{0}^{\infty}\sqrt{e^{-s}\cdot\left|\underline{K}_{s}\left(\varphi\right)\right|}\cdot\sqrt{\frac{e^{-1/s}}{s^{2}}\cdot\left|\underline{K}_{s}\left(\varphi\right)^{\circ}\right|}ds\right)^{2}\\
 & \ge c^{n}\cdot\left|B_{2}^{n}\right|^{2}\cdot\left(\int_{0}^{\infty}\frac{e^{-s/2-1/\left(2s\right)}}{s}ds\right)^{2}\\
 & \ge a\cdot c^{n}\cdot\left|B_{2}^{n}\right|^{2}
\end{align*}
where 
\[
a=\left(\int_{0}^{\infty}\frac{e^{-s/2-1/\left(2s\right)}}{s}ds\right)^{2}\ge0.7
\]
may be computed numerically.
\end{proof}

\subsection*{Acknowledgment}

We would like to thank Joseph Lehec for his insightful remarks. We
would also like to thank Matthieu Fradelizi for referring us to the
paper \cite{FradMeyer2008}. Finally we wish to thank Keshet Einhorn
for her useful comments and corrections.

\bibliographystyle{amsplain}
\bibliography{Lc-Santalo}

\end{document}